\newtheorem{theorem}{Theorem}[section]
\newtheorem{proposition}{Proposition}
\theoremstyle{definition}
\newtheorem{definition}[theorem]{Definition}
\newtheorem{remark}{Remark}
\newcommand{\ds}{\displaystyle}
\newcommand{\vp}{\varphi}
\newcommand{\R}{\mathbb{R}}
\newcommand{\N}{\mathbb{N}}
\title[A fractional eigenvalue problem in $\mathbb{R}^N$] 
      {A fractional eigenvalue problem in $\mathbb{R}^N$}
\author[Giacomo Bocerani and Dimitri Mugnai]{}
\subjclass{Primary: 35R11, 45C05; Secondary: 35A15, 35P15, 49R05.}
 \keywords{Fractional Laplacian, entire solution, eigenvalue problem, asymptotically linear problem.}
 \email{giacomo.bocerani@unipg.it}
 \email{dimitri.mugnai@unipg.it}
\thanks{The second author is a member of the Gruppo Nazionale per l'Analisi Matematica, la Probabilit\`a e le loro Applicazioni (GNAMPA) of the Istituto Nazionale di Alta Matematica (INdAM), and
is supported by the GNAMPA Project {\sl Systems with irregular operators}.}
\begin{document}
\maketitle

\centerline{\scshape Giacomo Bocerani}
\medskip
{\footnotesize
 \centerline{Department of Mathematics and Computer Sciences, University of Perugia
}
   \centerline{Via Vanvitelli 1,}
   \centerline{06123
Perugia - Italy}
} 

\medskip

\centerline{\scshape Dimitri Mugnai}
\medskip
{\footnotesize
 \centerline{Department of Mathematics and Computer Sciences, University of Perugia
}
   \centerline{Via Vanvitelli 1,}
   \centerline{06123
Perugia - Italy}
}

\bigskip

\begin{abstract}
We prove that a linear fractional operator with an asymptotically constant lower order term in the whole space admits eigenvalues.
\end{abstract}

\section{Introduction}

It is well known that the spectrum of the Laplace operator in $\R^N$ is purely continuous, that is $\sigma(-\Delta)=[0,\infty)$. On the other hand, considering operators of the form
\[
-\Delta+g(x) \mbox{ or }-\frac{\Delta}{g(x)},
\]
where $g:\R^N\to \R$ satisfies suitable growth conditions, one may hope to apply the standard approach in Hilbert spaces and prove the existence of a principal eigenvalue (see \cite{bcf} and \cite{DH}, \cite{LY}, \cite{MuPalogistic}, \cite{MP} for examples in a Banach setting) or of a diverging sequence of eigenvalues (see \cite{alle}, or \cite{SW} when $g(x)\to 0$ at infinity).

In the recent paper \cite{llg}, the authors consider operators of the form
\[
 -\Delta   u + \beta g(x)u,
\]
where $g\approx1$ at infinity, and study the associated spectrum. Of course, in this case the situation is different, since no compactness argument can be invoked, but they prove that eigenvalues do exist. Inspired by their result, we consider a related situation for the following eigenvalue fractional Laplacian problem in $\mathbb{R}^N$:
\begin{equation}
\label{E1}
( -\Delta )^s u + \beta g(x)u = \lambda u, \ \ x \in \mathbb{R}^N.
\end{equation}
Here $N>2s$, $s\in(0,1)$, $\beta > 0$ is a parameter and $(-\Delta)^s$ denotes the fractional Laplacian defined through the Fourier transform in the following way: for any $f\in {\mathscr S}(\R^N)$ with
Fourier transform ${\mathcal F} f=\hat{f}$, we define, modulo a positive multiplicative constant depending on $N$ and $s$,
\begin{equation}\label{deltas}
(-\Delta)^s f={\mathcal F}^{-1}\big(|\xi|^{2s}\hat{f}(\xi)\big),
\end{equation}
see \cite{va}. When $u$ is sufficiently regular, a pointwise expression of the fractional Laplacian is also available, namely
\[
( -\Delta ^s ) u :=-C_{N,s}\int_{\R^N}\frac{u(x+y)+u(x-y)-2u(x)}{|y|^{N+2s}}\,dy,
\]
for some $C_{N,s}>0$. Here, we will not go into further details about the functional setting of the problem, postponing these aspects to Section \ref{secfunctional}. We only remark that we look for couples $(\lambda,u)$ with $\lambda\in \R$ and $u\neq0$ which satisfy \eqref{E1}.

As in \cite{llg}, we assume to deal with a function $g$ which is not constant and tends to 1 at infinity. More precisely, denoting by $m(S)$ the measure of a set $S\subset \R^N$, we make the following assumptions on $g$:\\

\noindent $(g_{1})$ $g \in L^{\infty}(\mathbb{R}^N)$, $0 \le g \le 1$, $\ds\lim_{|x| \to \infty}g(x)=1$ and $m\left(\big\{ x \in \mathbb{R}^N \,:\,  g(x)<1 \big\}\right)>0$.

Although the assumptions on $g$ are rather weak, we show that also in this case the operator 
\[
L_{\beta }:H^{s}(\mathbb{R}^N) \to \left ( H^{s}(\mathbb{R}^N) \right )' \mbox{ (cfr. Section \ref{secfunctional})} , \quad u\mapsto ( -\Delta ^s ) u + \beta g(x)u,
\]
admits eigenvalues, which are all below $\beta$, see Theorem \ref{T1} below. We emphasize the fact that, in spite of this difference with the case of $-\Delta$ in bounded domains, we prove the existence of a first eigenfunction which preserves the same nice properties of the first eigenfunction of $-\Delta$ in bounded domains. More precisely, we prove that:
\begin{itemize}
\item the first eigenvalue $\lambda_1$ is simple,
\item the associated eigenfunction is strictly positive in $\R^N$,
\item any eigenfunction associated to any other eigenvalue is nodal, i.e. sign--changing,
\item there exists a different minimax characterization of the second eigenvalue.
\end{itemize}
Of course, as usual when eigenvalues are found via the Krasnoselskii genus, as we do, we don't know if other eigenvalues can show up.

Completing the previous description, in analogy with the case of $-\Delta$ in $\R^N$, we observe the following property, which is well--known, and which we prove using an elegant Pohozaev identity.
\begin{proposition}\label{noeig}
$(-\Delta)^s$ has no eigenvalues in $\R^N$, i.e. if $\mu \in \mathbb{R}$ and $u\in H^s(\R^N)$ are such that 
\begin{equation}\label{E26}
\int_{\mathbb{R}^N \times \mathbb{R}^N} 
\frac{(u(x)-u(y))(\varphi(x)-\varphi(y))}{|x-y|^{N+2s}} 
\, dxdy= \mu \int_{\mathbb{R}^N}u\varphi \, dx
\end{equation}
for all $\varphi $ in $\mathbb{R}^N$, then
$u\equiv 0$.
\end{proposition}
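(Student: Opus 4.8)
The plan is to establish the Pohozaev identity for the fractional Laplacian and use it to rule out nontrivial eigenfunctions. The key observation is that equation \eqref{E26} is the weak form of $(-\Delta)^s u = \mu u$ in $\R^N$, and for such entire solutions a scaling/dilation argument forces $\mu = 0$, after which a separate argument kills the case $\mu = 0$ as well.

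\medskip

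\textbf{Step 1: Reduce to the strong equation and gain regularity.} First I would note that \eqref{E26} means $u$ is a weak solution of $(-\Delta)^s u = \mu u$. Standard elliptic regularity for the fractional Laplacian (bootstrapping through the Bessel-potential / $H^{s}$ scale, using $N>2s$) upgrades $u$ to a smooth, decaying solution to which the pointwise identities below apply; this justifies all the integrations by parts that follow.

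\medskip

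\textbf{Step 2: Apply the Pohozaev identity.} The heart of the proof is the fractional Pohozaev identity: if $u\in H^s(\R^N)$ solves $(-\Delta)^s u = f(u)$ (here $f(u)=\mu u$), then testing against the dilation generator $x\cdot\nabla u$ yields a relation of the form
\begin{equation}\label{pohoz}
\frac{N-2s}{2}\int_{\R^N}u\,(-\Delta)^s u\,dx = N\int_{\R^N}F(u)\,dx,
\end{equation}
where $F'=f$, so here $F(u)=\tfrac{\mu}{2}u^2$. On the other hand, multiplying the equation by $u$ and integrating gives the energy identity
\begin{equation}\label{energy}
\int_{\R^N}u\,(-\Delta)^s u\,dx = \mu\int_{\R^N}u^2\,dx.
\end{equation}
Substituting \eqref{energy} into \eqref{pohoz} produces $\frac{N-2s}{2}\,\mu\int u^2 = N\cdot\frac{\mu}{2}\int u^2$, i.e. $\bigl(\tfrac{N-2s}{2}-\tfrac{N}{2}\bigr)\mu\int u^2 = 0$, which is $-s\,\mu\int_{\R^N}u^2\,dx = 0$. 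Since $s>0$ and $u\neq 0$ would give $\int u^2>0$, this forces $\mu=0$.

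\medskip

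\textbf{Step 3: Dispose of the case $\mu=0$.} It remains to exclude a nontrivial solution of $(-\Delta)^s u = 0$ in $H^s(\R^N)$. This follows immediately from \eqref{energy} with $\mu=0$: the Gagliardo seminorm $[u]_{s}^2=\int_{\R^N}u\,(-\Delta)^s u\,dx$ vanishes, so $u$ is constant, and the only constant in $H^s(\R^N)$ is $0$. Hence $u\equiv 0$ in every case, proving the proposition.

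\medskip

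I expect the main obstacle to be the rigorous derivation and justification of the Pohozaev identity \eqref{pohoz} in the nonlocal setting: unlike the local Laplacian, the multiplier $x\cdot\nabla u$ does not interact with $(-\Delta)^s$ through a clean pointwise integration by parts, so one must either invoke the known fractional Pohozaev identity (via the Caffarelli--Silvestre $s$-harmonic extension to $\R^{N+1}_+$, computing the boundary flux of the Emden--Fowler / Rellich--Pohozaev vector field) or argue directly through the Fourier representation \eqref{deltas}, where the quadratic form $\int|\xi|^{2s}|\hat u|^2\,d\xi$ rescales explicitly under $u_\tau(x)=u(\tau x)$ and differentiating at $\tau=1$ reproduces \eqref{pohoz}. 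Securing the decay needed to discard the extension's boundary terms at infinity is the one genuinely technical point; everything after the identity is the elementary algebra of Step 2.
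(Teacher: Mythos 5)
Your proposal is correct and takes essentially the same route as the paper: the authors likewise combine the fractional Pohozaev identity (which they simply cite from Secchi, Proposition 4.1, rather than re-deriving it via the extension or Fourier scaling) with the energy identity obtained by testing \eqref{E26} with $\varphi=u$, arriving at the same algebraic contradiction unless $\mu=0$, and then dispose of the $\mu=0$ case through the vanishing of the Gagliardo seminorm.
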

\begin{remark}
Of course, we reduce to problem \eqref{E26} when $g=1$ in $\R^N$, and this shows that the condition  ``$m\left(\big\{ x \in \mathbb{R}^N \,:\,  g(x)<1 \big\}\right)>0$'' in $(g_1)$ is necessary and sufficient  to get eigenfunctions for $L_\beta$.
\end{remark}

The spectral properties that we prove here are the starting point of further investigations about existence results for nonlinear fractional problems in the whole of $\R^N$, see \cite{bm}.

\section{Functional setting and eigenvalues}\label{secfunctional}

As usual, we are interested in functions satisfying \eqref{E1} in a weak sense. For this, let us recall some definitions: for $s\in (0,1)$ we denote by $H^s(\R^N)$ the Sobolev space of fractional order $s$ defined as
\[
H^s(\R^N)=\left\{u\in L^2(\R^N)\,:\,  \frac{|u(x)-u(y)|}{|x-y|^{N/2+s}}\in L^2\left(\R^N\times \R^N\right) \right\},
\]
and
\[
[u]_{H^s(\R^N)}:=\left( \int_{\R^N\times\R^N}  \frac{|u(x)-u(y)|^2}{|x-y|^{N+2s}}dxdy\right)^{1/2}
\]
denotes the Gagliardo seminorm of $u$. Moreover, we set $\|\cdot\|_2=\|\cdot\|_{L^2(\R^N)}$.

Now, by the Plancharel Theorem, $\|u\|_2=\|\hat{u}\|_2$, and by \eqref{deltas}
\begin{equation}\label{nor1}
\int_{\R^N} |(-\Delta)^{\frac{s}{2}}u|^2dx=\int_{\R^N}|\xi|^{2s}|\hat{u}(\xi)|^2d\xi
\end{equation}
for every $u\in H^s(\R^N)$. As a consequence, $\beta$ being positive, $H^s(\R^N)$ can be endowed with the norm
\[
\|u\|^2:=\int_{\R^N} |(-\Delta)^{\frac{s}{2}}u|^2dx+\beta\int_{\R^N}u^2dx
\]
for all $u\in H^s(\R^N)$, see \cite{CW}. 

On the other hand, by \cite[Lemma 3.1]{fls},
\begin{equation}\label{nor2}
\int_{\R^N} |\xi|^{2s}|\hat{u}(\xi)|^2d\xi=c_{N,s}\int_{\R^N\times\R^N}\frac{|u(x)-u(y)|^2}{|x-y|^{N+2s}}dxdy
\end{equation}
for every $u\in H^s(\R^N)$ and some positive constant $c_{N,s}$. Then, by \eqref{nor1} and \eqref{nor2}, after setting $c_{N,s}=1$, we immediately get
\begin{equation}\label{formadeb}
\int_{\R^N} (-\Delta)^{\frac{s}{2}}u (-\Delta)^{\frac{s}{2}}v\,dx=\int_{\R^N\times\R^N}\frac{\big(u(x)-u(y)\big)\big(v(x)-v(y)\big)}{|x-y|^{N+2s}}dxdy.
\end{equation}

In light of the previous considerations, we are now ready to give the following
\begin{definition}
A function $u\in H^s(\R^N)$ is an eigenfunction of problem \eqref{E1} with associated eigenvalue $\lambda$  if $u\neq 0$ and
\[
\int_{\R^N} (-\Delta)^{\frac{s}{2}}u (-\Delta)^{\frac{s}{2}}\vp\,dx+\beta \int_{\R^N}g(x)u\vp\,dx=\lambda  \int_{\R^N}u\vp\,dx
\]
for every $\vp\in H^s(\R^N)$.
\end{definition}

Of course, in view of \eqref{formadeb}, the previous identity can be written as
\begin{equation}\label{E2} 
\begin{aligned}
&\int_{\mathbb{R}^N \times \mathbb{R}^N} 
\frac{(u(x)-u(y))( \varphi (x) -  \varphi (y))}{|x-y|^{N+2s}} \, dxdy
\\  &+ \beta \int_{\mathbb{R}^N}g(x)u\varphi \, dx  = \lambda \int_{\mathbb{R}^N}u\varphi \, dx
\quad \forall \,\varphi \in H^{s}(\mathbb{R}^N) .
\end{aligned}
\end{equation}

In order to state our main result, let us introduce some notions. First, we set
$$\Sigma = \left \{ u \in H^{s}(\mathbb{R}^N) \,:\,\int_{\mathbb{R}^N}u^2 \,dx  =1\right \}
$$
and
$$\lambda _{1}= \inf _{u \in \Sigma } \Phi (u),
$$
where 
$$\Phi (u)= \int_{\mathbb{R}^N \times \mathbb{R}^N} \frac{|u(x)-u(y)|^{2}}{|x-y|^{N+2s}} \, dxdy + 
\beta \int_{\mathbb{R}^N}g(x)u^{2} \, dx .$$
Then, for every $k \in \mathbb{N}$, we introduce the families of sets 
$$
\Sigma _{k}= \Big\{ A \subset \Sigma _{1} \,:\, A \ \mbox{ is  compact, } -A=A \mbox{ and }  \gamma (A) \ge k  \Big\}, $$
where $\gamma (A)$ denotes the Krasnoselskii genus of a symmetric set A, defined as 
$$\gamma (A) =\left\{
\begin{array}{l}
\inf \Big\{ m \,:\, \exists \, h \in C^0\left ( A,\mathbb{R}^m \right) \mbox{ with } h(u)=-h(-u)  \Big\},
\\ \infty , \ \mbox{if} \ \left \{...\right \} = \emptyset , \ \mbox{ in particular  if} \ 0 \in A,
\end{array}
\right. $$
see \cite{Struwe}. Associatd to $\Sigma_k$ we set
$$
\lambda _{k}= \inf _{A \in \Sigma _{k}} \sup _{u \in A}\Phi (u).
$$
Finally, we define
$$M= \left \{ u \in H^{s}(\mathbb{R}^N) \,:\,\int _{\mathbb{R}^N} (1-g(x))|u|^2 \, dx = 1  \right \},$$
$$M_{k}= \Big\{ A \subset M \,:\,  A \mbox{ is compact, } -A=A \mbox{ and } 
\gamma (A) \ge k \Big\},  \ \  k \in \mathbb{N},$$
and 
$$ \Gamma _{k} = \inf _{A \in M_{k}} \sup _{u \in A}
\int_{\mathbb{R}^N \times \mathbb{R}^N} \frac{|u(x)-u(y)|^{2}}{|x-y|^{N+2s}} \, dxdy, \ \ \  k \in \mathbb{N}.$$
Note that $M\not = \emptyset$ by $(g_1)$.

Now, we are ready to state our main result:
\begin{theorem}\label{T1}
Assume $(g_{1})$. Then
\begin{itemize}
\item[$(1)$] $\left \{ \lambda _{k} \right \} _{k}$ is a nondecreasing sequence and $\lambda _{k} \le \beta$ for all $k\in \N$.
\item[$(2)$] If $\lambda _{k}< \beta $, then $\lambda _{k}$ is an eigenvalue of the operator $L_{\beta }$, namely there exist an eigenfunction $u_{k} \in \Sigma _{k}$ such that for any $\varphi \in H^{s}(\mathbb{R}^N)$
\begin{equation}
\label{E3}
\begin{aligned}
&\int_{\mathbb{R}^N \times \mathbb{R}^N} 
\frac{(u_k(x)-u_k(y))( \varphi (x) -  \varphi (y))}{|x-y|^{N+2s}} \, dxdy
\\  &+ \beta \int_{\mathbb{R}^N}g(x)u_k\varphi \, dx  = \lambda _{k}\int_{\mathbb{R}^N}u_k\varphi \, dx.
\end{aligned}
\end{equation}
\item[$(3)$] If $ \lambda _{1}$  is an eigenvalue, then it is simple and the associated eigenfunction $e_1$  is 
strictly positive. Moreover, eigenfunctions associated to eigenvalues different from $ \lambda_{1}$  are nodal, i.e. they change sign.
\item[$(4)$] If $\Gamma _{k}<\beta$, then $\lambda _{k}< \beta $, and hence $\lambda _{k}$  is an eigenvalue.
\end{itemize}
\end{theorem}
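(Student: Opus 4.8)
The plan is to treat the four assertions in turn, with the understanding that the only real difficulty---the failure of compactness in $\R^N$---is confined to part $(2)$ and is defeated precisely by the strict gap below $\beta$. Throughout I abbreviate the square of the Gagliardo seminorm by $[u]^2:=\int_{\R^N\times\R^N}\frac{|u(x)-u(y)|^2}{|x-y|^{N+2s}}\,dxdy$, so that, using $\int_{\R^N}u^2\,dx=1$ and $\|u\|^2=[u]^2+\beta$ on $\Sigma$, one has the convenient identity
\[
\Phi(u)=[u]^2+\beta\int_{\R^N}g u^2\,dx=[u]^2+\beta-\beta\int_{\R^N}(1-g)u^2\,dx .
\]
For part $(1)$, monotonicity is immediate from the inclusion $\Sigma_{k+1}\subset\Sigma_{k}$, which forces the infimum defining $\lambda_{k+1}$ to be taken over a smaller family, whence $\lambda_{k+1}\ge\lambda_k$. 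To see $\lambda_k\le\beta$ I would exhibit, for each $k$ and each $\ep>0$, a compact symmetric set of genus $k$ on which $\Phi\le\beta+\ep$: fix any $k$-dimensional subspace $V\subset H^{s}(\R^N)$ and dilate it via $u\mapsto u_\rho:=\rho^{-N/2}u(\cdot/\rho)$, which preserves $\|u\|_2$ but scales $[u_\rho]^2=\rho^{-2s}[u]^2$. On the genus-$k$ unit sphere of the dilated space the seminorm is $O(\rho^{-2s})$ while $\beta\int_{\R^N}g u^2\,dx\le\beta$ (as $g\le1$); letting $\rho\to\infty$ gives $\lambda_k\le\beta$.

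Part $(2)$ is the heart of the matter and where I expect the main work to lie. I would run Lusternik--Schnirelmann theory for the even functional $\Phi$ on the symmetric $C^1$-manifold $\Sigma$, the only nontrivial input being the Palais--Smale condition at levels $c<\beta$. A Palais--Smale (or, for $k=1$, minimizing) sequence $u_n\in\Sigma$ at level $c$ has $[u_n]^2$ bounded, hence $u_n\rightharpoonup u$ in $H^s(\R^N)$ up to a subsequence. Since $1-g\ge0$ decays at infinity, the map $u\mapsto\int_{\R^N}(1-g)u^2\,dx$ is weakly continuous (local $L^2$-compactness controls the bounded part, while the decay of $1-g$ controls the tail), so $\int(1-g)u_n^2\to\int(1-g)u^2$. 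The conceptual crux is that $u\ne0$: if $u=0$ then $\int(1-g)u_n^2\to0$ and the identity above gives $c=\lim[u_n]^2+\beta\ge\beta$, contradicting $c<\beta$. Quantitatively, a Brezis--Lieb type splitting $[u_n]^2=[u]^2+[u_n-u]^2+o(1)$ and $\|u_n\|_2^2=\|u\|_2^2+\|u_n-u\|_2^2+o(1)$ (the cross terms vanishing by weak convergence), combined with the fact that the weak limit solves the equation with Lagrange multiplier $c$, yields $\lim[u_n-u]^2=(c-\beta)(1-\|u\|_2^2)$; the left side is nonnegative, $c-\beta<0$, and $\|u\|_2\le1$ by weak lower semicontinuity, so necessarily $\|u\|_2=1$ and $u_n\to u$ strongly. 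Thus $(PS)_c$ holds for every $c<\beta$, the deformation/genus machinery of \cite{Struwe} makes each $\lambda_k<\beta$ a critical value of $\Phi|_\Sigma$, and the Lagrange multiplier rule turns the corresponding critical point $u_k$ into a solution of \eqref{E3}.

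For part $(3)$, since $[\,|u|\,]\le[u]$ for every $u$ (with strict inequality whenever $u$ changes sign on a set of positive measure) and $\int g|u|^2=\int g u^2$, every minimizer may be taken nonnegative, and the strong maximum principle for $(-\Delta)^s$ upgrades this to $e_1>0$ in $\R^N$. Simplicity follows because any two $L^2$-orthonormal elements of the $\lambda_1$-eigenspace are again minimizers, hence each of constant sign by the same argument; two functions of constant sign cannot be $L^2$-orthogonal, so the eigenspace is one-dimensional. Finally, if $v$ solves \eqref{E3} with eigenvalue $\lambda\ne\lambda_1$, symmetry of the bilinear form gives $(\lambda-\lambda_1)\int_{\R^N}e_1 v\,dx=0$, so $\int e_1 v=0$; orthogonality to the strictly positive $e_1$ forces $v$ to change sign.

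For part $(4)$, I would transport a good set from $M_k$ to $\Sigma_k$. If $\Gamma_k<\beta$ there is a compact symmetric $A\in M_k$ with $\sup_{A}[u]^2<\beta$. Radial normalization $T(u):=u/\|u\|_2$ is an odd continuous map into $\Sigma$, so $T(A)$ is compact, symmetric and $\gamma(T(A))\ge\gamma(A)\ge k$, i.e. $T(A)\in\Sigma_k$. For $u\in A$ one has $\int(1-g)u^2=1$, hence $\|u\|_2^2\ge1$, and a direct computation using the identity above gives $\Phi(T(u))=\beta+\bigl([u]^2-\beta\bigr)/\|u\|_2^2$. Since $[u]^2-\beta\le\sup_{A}[u]^2-\beta<0$ uniformly and $\|u\|_2^2$ is bounded on the compact set $A$, the right-hand side is bounded above by a constant strictly below $\beta$; therefore $\lambda_k\le\sup_{T(A)}\Phi<\beta$, and part $(2)$ then makes $\lambda_k$ an eigenvalue.
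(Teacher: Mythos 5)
Your proposal is correct and reaches all four conclusions, but in two places it takes a genuinely different route from the paper. In part $(2)$ the paper never verifies the Palais--Smale condition: it extracts a $(PS)$ sequence via Ekeland's principle, passes to the weak limit, proves the limit solves the equation, and rules out $u\equiv 0$ by the concentration--compactness dichotomy (vanishing would force $\int_{\R^N} g\,u_n^2\,dx\to 1$ and hence $\beta\le\lambda_k$). You instead prove that $(PS)_c$ actually holds at every level $c<\beta$, combining the sequential weak continuity of $u\mapsto\int_{\R^N}(1-g)u^2\,dx$ with the exact Hilbert-space splitting of the Gagliardo form to get $\lim_n [u_n-u]^2=(c-\beta)\bigl(1-\|u\|_2^2\bigr)$; the sign analysis then forces $\|u\|_2=1$ and strong convergence. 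I checked the algebra and it is right; your version is stronger (it shows the minimax level is attained on $\Sigma$, not merely that some nontrivial weak limit solves the equation) at the cost of the bookkeeping in the splitting identity. In part $(3)$ the paper proves simplicity by matching averages over balls ($\int_{B_R}(e_1-\chi_R u)\,dx=0$ together with the claim that a signed eigenfunction with vanishing average must vanish), whereas you use the strict inequality $[\,|u|\,]^2<[u]^2$ for sign-changing $u$ --- a genuinely nonlocal fact, valid here since the cross terms $u_+(x)u_-(y)$ contribute on a set of positive measure --- to conclude that every $\lambda_1$-eigenfunction has constant sign, and that two such cannot be $L^2$-orthogonal; this is cleaner and avoids the slightly hand-waved step in the paper's $\chi_R$ argument. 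Parts $(1)$ and $(4)$ coincide with the paper's arguments up to notation: your dilation $\rho\to\infty$ is the paper's $t\to 0^+$ (and you need only $g\le 1$ rather than the dominated-convergence limit of $\int g(y/t)u^2\,dy$), while the normalization map $T$ and the identity $\Phi(T(u))=\beta+\bigl([u]^2-\beta\bigr)/\|u\|_2^2$ are exactly the paper's computation with $A^*$.
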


\begin{proof}
$(1)$ Obviously, $\Sigma _{1} \supset \Sigma _{2} \supset  \ldots \supset \Sigma _{k} \supset  \ldots $, and from the very definition of $\lambda _{k}$ we have that $\left \{ \lambda _{k} \right \} _{k}$ is a nondecreasing sequence. Now, let us take 
$A \in \Sigma _{k}$, and define 
$$
A_{t}=\left \{ v_t \,:\, v_t(x)=t^{\frac{N}{2}}u(tx), u \in A, t>0 \right \}.
$$
Then, for all $v \in A_{t}$, we have
$$
\int _{\mathbb{R}^N}|v_t(x)|^2 \, dx = \int _{\mathbb{R}^N}|u(y)|^2 \, dy
$$
and 
\[
\int_{\mathbb{R}^N \times \mathbb{R}^N} \frac{|v_t(x)-v_t(y)|^{2}}{|x-y|^{N+2s}} \, dxdy 
= t^{2s}\int_{\mathbb{R}^N \times \mathbb{R}^N} \frac{|u(w)-u(z)|^{2}}{|w-z|^{N+2s}} \, dwdz.
\]
So, we have
\begin{displaymath}
\begin{aligned}
\Phi (v_t)&=\int_{\mathbb{R}^N \times \mathbb{R}^N} \frac{|v_t(x)-v_t(y)|^{2}}{|x-y|^{N+2s}} \, dxdy +
\beta \int _{\mathbb{R}^N}g(x)v_t^2 \, dx
\\ & = t^{2s}\int_{\mathbb{R}^N \times \mathbb{R}^N} \frac{|u(w)-u(z)|^{2}}{|w-z|^{N+2s}} \, dwdz + 
\beta \int _{\mathbb{R}^N}g\left (\frac{y}{t}\right )u^2 \, dy.
\end{aligned}
\end{displaymath} 
Since, from Lebesgue's dominated convergence theorem,
$$ \lim_{t \to 0^{+}} \int _{\mathbb{R}^N}g\left (\frac{y}{t}\right )u^2 \, dy= \int _{\mathbb{R}^N}u^2 \, dy=1,$$
we immediately have that
$$
\lim_{t \to 0^{+}}\Phi (v_t)= \beta.
$$
Let us notice that such a limit is uniform in $u\in A$, since $A \in \Sigma _{k}$ is compact. Moreover, also
$A_{t} \in \Sigma _{k}$ for every $t>0$, so that $\lambda _{k} \le \sup _{v \in A_{t}}\Phi (v)$, and, as a consequence,
$$ \lambda _{k} \le \lim_{t \to 0^{+}}\sup _{v \in A_{t}}\Phi (v)=\beta,
$$
and the claim holds.\\

\noindent $(2)$ Let us suppose that $\lambda _{k}< \beta $. If $\left. \Phi \right| _{\Sigma}$ satisfies the $(PS)$ condition\footnote{We recall that $\left. \Phi \right| _{\Sigma_k}$ satisfies the $(PS)$ condition if any sequence $\{u_n\}_n$ in $\Sigma_k$ such that $\left. \Phi \right| _{\Sigma_k}'(u_n) \xrightarrow[n\to \infty]{}0$ and $\{\Phi(u_n)\}_n$ is bounded, admits a convergent subsequence.}, then $\lambda _{k}$ is a critical value of $\left . \Phi \right | _{\Sigma }$ (see \cite[Theorem 10.9]{Amb-Malc}), and there exists $u\in \Sigma_k$ such that
\begin{equation}
\label{E6}
\begin{aligned}
\int_{\mathbb{R}^N \times \mathbb{R}^N} \frac{(u(x)-u(y))( \varphi (x) -  \varphi (y))}{|x-y|^{N+2s}} \, dxdy 
&+\beta \int _{\mathbb{R}^N}g(x)u\varphi \, dx \\
&=\lambda_k \int _{\mathbb{R}^N}u\varphi \, dx 
\end{aligned}
\end{equation}
for all $\varphi\in  H^{s}(\mathbb{R}^N)$.

Now, suppose that $(PS)$ does not hold. By the Ekeland Principle (see \cite[Theorem 8.5]{willem}) we can  find a $(PS)$ sequence for $\left . \Phi \right | _{\Sigma_k}$ at level $\lambda_k$: this means that there exist a sequence 
$\left \{ u_{n} \right \}_{n}
\subset \Sigma_k$ and a sequence  $\left \{ \mu _{n} \right \}_{n} \subset \mathbb{R}$ such that
\begin{equation}\label{E5}
\Phi ( u_{n})= \int_{\mathbb{R}^N \times \mathbb{R}^N} \frac{|u_{n}(x)-u_{n}(y)|^{2}}{|x-y|^{N+2s}} \, dxdy 
+  \beta \int _{\mathbb{R}^N}g(x)u_{n}^2 \, dx \to \lambda _{k}
\end{equation}
as $n \to \infty$, and
\begin{equation}\label{E4}
\begin{aligned}
\int_{\mathbb{R}^N \times \mathbb{R}^N} &\frac{(u_{n}(x)-u_{n}(y))( \varphi (x) -  \varphi (y))}{|x-y|^{N+2s}} \, dxdy 
+\beta \int _{\mathbb{R}^N}g(x)u_{n}\varphi \, dx \\& - \mu _{n} \int _{\mathbb{R}^N}u_{n}\varphi \, dx =
o(1)\left \|\varphi \right \|
\end{aligned}
\end{equation}
for all $\varphi \in  H^{s}(\mathbb{R}^N)$.

By \eqref{E5}, it is clear that $\left \{ u_{n} \right \}_{n}$ is bounded in $ H^{s}(\mathbb{R}^N)$. Thus, taking $\varphi = u_{n} $ in \eqref{E4}, we obtain
$$
\lim_{n \to \infty} \left(\int_{\mathbb{R}^N \times \mathbb{R}^N} \frac{|u_{n}(x)-u_{n}(y)|^2}{|x-y|^{N+2s}} \, dxdy 
+\beta \int _{\mathbb{R}^N}g(x)u_{n}^2\, dx - \mu _{n} \int _{\mathbb{R}^N}u_{n}^2\, dx\right) =0.
$$
This limit, together with \eqref{E5} and the fact that $\left \{ u_{n} \right \}_{n} \subset \Sigma_k$, implies that 
$$\lim_{n \to \infty} \mu _{n} = \lambda _{k}.
$$

Since $H^{s}(\mathbb{R}^N)$ is a Hilbert space, we can assume that
\begin{equation}\label{E7}
\begin{aligned}
& u_{n} \rightharpoonup u \in H^{s}(\mathbb{R}^N)\\
& u_{n}\to u \in L^q_{\rm loc}(\mathbb{R}^N)\mbox{ for all }q\in [1, 2^{s_*}),\\
&  u_{n}\to u  \mbox{ a.e in $\R^N$},
\end{aligned}
\end{equation}
where $2^{s_*}=\frac{2N}{N-2s}$ is the critical fractional Sobolev exponent (see \cite{stein}).

Now, fixed $\varphi \in C^{\infty}_{0}(\mathbb{R}^N)$. From \eqref{E7} we have
$$ \int _{\mathbb{R}^N}g(x)u_{n}\varphi\, dx \to  \int _{\mathbb{R}^N}g(x)u\varphi\, dx,
$$
$$ \mu _{n} \int _{\mathbb{R}^N}u_{n}\varphi\, dx \to \lambda_{k} \int _{\mathbb{R}^N}u\varphi\, dx $$
and
$$
\begin{aligned} &\lim_{n \to \infty} \int_{\mathbb{R}^N \times \mathbb{R}^N} \frac{(u_{n}(x)-u_{n}(y))
( \varphi (x) -  \varphi (y))}{|x-y|^{N+2s}} \, dxdy \\& \ \ = \int_{\mathbb{R}^N \times \mathbb{R}^N} 
\frac{(u(x)-u(y))( \varphi (x) -  \varphi (y))}{|x-y|^{N+2s}} \, dxdy  . 
\end{aligned}
$$
Thus \eqref{E4} implies 
$$
\begin{aligned} \int_{\mathbb{R}^N \times \mathbb{R}^N}\frac{(u(x)-u(y))( \varphi (x) -  \varphi (y))}{|x-y|^{N+2s}} 
\, dxdy &+\beta \int _{\mathbb{R}^N}g(x)u\varphi\, dx 
\\&= \lambda_{k} \int _{\mathbb{R}^N}u\varphi\, dx
\end{aligned}
$$
for all $\varphi \in C^{\infty}_{0}(\mathbb{R}^N)$, and, by density, for all $\varphi \in H^{s}(\mathbb{R}^N)$. This shows that $u$ solves \eqref{E1} with $\lambda=\lambda_k$. Now, we show that 
$u\not\equiv 0$. By the Concentration--Compactness Principle, either there exist $R>0$ and $\nu >0$ such that (up to subsequences)
\begin{equation}\label{E8}
\lim_{n \to \infty} \int_{B_{R}}u_{n}^2 \,dx=\nu >0,
\end{equation}
or
\begin{equation}\label{E9}
\lim_{n \to \infty} \int_{B_{R}}u_{n}^2 \,dx=0  \mbox{ for all $R>0$}.
\end{equation}

If (\ref{E8}) holds, then obviously $u\not\equiv 0$. Thus, suppose that (\ref{E9}) holds. By $(g_{1})$ we have
that for all $\epsilon>0$ there exist $R_{\epsilon}>0$ such that $0\leq 1-g(x)<\epsilon$ if $|x|>R_\epsilon$, and from (\ref{E9})
$$
0\leq \int_{ \mathbb{R}^N}(1-g(x))u_{n}^2 \, dx =\int_{B_{R_\epsilon}}(1-g(x))u_{n}^2 \, dx+ \int_{ \mathbb{R}^N\setminus B_{R_{\epsilon}}}(1-g(x))u_{n}^2 \, dx.
$$
Now, 
\[
0\leq \int_{B_{R_\epsilon}}(1-g(x))u_{n}^2 \, dx\leq \int_{B_{R_\epsilon}}u_n^2dx\to 0
\]
as $n\to \infty$ by assumption, while 
\[
0\leq \int_{\R^N\setminus B_{R_\epsilon}}(1-g(x))u_{n}^2 \, dx\leq  \epsilon \int_{ \mathbb{R}^N\setminus B_{R_{\epsilon}}}u_n^2 \, dx \le \epsilon,
\]
so that
\begin{equation} \label{E10} 
\lim_{n\to \infty}\int_{ \mathbb{R}^N}(1-g(x))u_{n}^2 \, dx=0.
\end{equation}

From \eqref{E10} we immediately get 
\begin{equation} \label{E11} 
\lim_{n \to \infty} \int_{ \mathbb{R}^N}g(x)u_{n}^2 \, dx=\lim_{n \to \infty}\int_{ \mathbb{R}^N}(g(x)-1)u_{n}^2 \, dx
+1=1.
\end{equation} 
Choosing $\varphi=u_{n}$ as test function in (\ref{E4}) and passing to the limit, we obtain 
$\beta =  \lambda_{k} - c$ for some constant $c\geq0$. Hence, $\beta \le \lambda_{k}$, which contradicts our assumption. 

Summing up, we have shown that $\lambda_{k}$ is an eigenvalue with associated eigenfunction $u\not\equiv 0$.\\

\noindent $(3)$ First, let us notice that we can always find a positive eigenfunction associated to $\lambda_{1}$. Indeed, for all $u\in H^s(\R^N)$, by the triangle inequality we have
$$
\begin{aligned}\Phi(u)&=\int_{\mathbb{R}^N \times \mathbb{R}^N} \frac{|u(x)-u(y)|^{2}}{|x-y|^{N+2s}} 
\, dxdy +  \beta\int_{\mathbb{R}^N}g(x)u^2\, dx 
\\ &\ge  \int_{\mathbb{R}^N \times \mathbb{R}^N} \frac{||u(x)|-|u(y)||^{2}}{|x-y|^{N+2s}}
\, dxdy+ \beta\int_{\mathbb{R}^N}g(x)u^2\, dx =\Phi(|u|).
\end{aligned}
$$
Thus, if $e_{1}$ is an eigenfunction associated to $\lambda_{1}$, and without loss of generality we can assume $|e_{1}|_{p}=1$, we have that
$$
\lambda_{1}=\Phi(e_{1})\ge \Phi(|e_{1}|)\ge\ \inf_{|u|\in \Sigma}\Phi(u)\ge \lambda_{1},
$$
so that also $|e_{1}|\ge 0$ is an eigenfunction, as well. Thus, $|e_{1}|$ is a solution of (\ref{E2}), and by the regularity result of \cite[Theorem 3.4]{Fel-Quaas-Tan}, we get that $|e_{1}| \in C^{0,\mu}(\mathbb{R}^N)$ for some $\mu \in (0,1)$. Moreover, thanks to the Harnack's inequality in \cite[Theorem 3.1]{Tan-Xiong}, we conclude that $|e_{1}|$ is strictly positive in $\R^N$. 
This implies that all signed solutions of (\ref{E2}) with $k=1$ are strictly positive (or negative) in the whole of $\R^N$.

Now let $u$ be another eigenfunction associated to $\lambda_{1}$. Then, for all $R>0$ there exists
$\chi_{R}\in \R$ such that $$\int_{B_{R}}e_1\, dx=\chi_{R}\int_{B_{R}}u\, dx,
$$ 
that is
$$
\int_{B_{R}}(e_1-\chi_{R}u)\, dx=0.
$$ 
Since $e_1-\chi_{R}u$ can be assumed to be a signed eigenfunction associated to $\lambda_{1}$, then the previous identity implies that $e_1-\chi_{R}u=0$.  Now, if $R_{1} <R_{2}$, we find $e_1=\chi_{R_{1}}u$ in $B_{R_{1}}$ and $e_1=\chi_{R_{2}}u$ in $B_{R_{2}}$. Hence, $\chi_{R_{2}}=\chi_{R_{1}}=\chi$.
This implies that $e_1=\chi u$ in the whole space $\mathbb{R}^N$, i.e. the first eigenvalue is simple.

Finally, let us suppose that $\phi $ is an eigenfunction associated to an eigenvalue $\lambda > \lambda_{1}$, and let us suppose that $\phi$ is signed, for example
$\phi$ is nonnegative, that is $\phi_{-}:=\max\{0,-\phi\}\equiv0$. We know that if $v$, $w$ are eigenfunctions associated to eigenvalues $\mu \ne \nu$, then 
$$
<L_{\beta}v,w>=\mu\int_{\mathbb{R}^N}vw\, dx \qquad <L_{\beta}w,v>=\nu\int_{\mathbb{R}^N}wv\, dx,
$$
so that, using the fact that $<L_{\beta}v,w>=<L_{\beta}w,v>$, we get
$$
\int_{\mathbb{R}^N}wv\, dx=0.
$$
In particular, we have
$$
\int_{\mathbb{R}^N}e_{1}\phi\, dx=0,
$$ 
which is absurd, because both $e_{1}$ and $\phi$ are nonnegative and non zero. Then $\phi_{-}\not\equiv0$. A similar argument can be used 
to prove that $\phi_{+}\not\equiv0$. In conclusion, $\phi$ is sign changing.
\\

\noindent$(4)$ Let us suppose that $\Gamma_{k}<\beta $. By definition of $\Gamma_{k}$, there exist a compact and  
symmetric set $A \in M_{k}$  such that 
\begin{equation} \label{E12} \sup _{u \in A}\int_{\mathbb{R}^N \times \mathbb{R}^N} \frac{|u(x)-u(y)|^{2}}
{|x-y|^{N+2s}} \, dxdy<\beta. \end{equation}
Let us define 
$$A^{*}=\left \{u^*\,:\, u^*=\frac{u}{\|u\|_{2}}, \, u\in A \right \}.
$$
Of course, 
$A^{*}\subset \Sigma$ and $A^{*}\in \Sigma_{k}$, in fact $A^*$ is compact and $\gamma(A^*)\ge k$. Now, for any $u^* \in A^*$, we have
$$
\begin{aligned}
\Phi(u^*)&=\frac{\ds\int_{\mathbb{R}^N \times \mathbb{R}^N} 
\frac{|u(x)-u(y)|^{2}}{|x-y|^{N+2s}} \, dxdy+ \beta\int_{\mathbb{R}^N}g(x)u^2\, dx } 
{\ds\int_{\mathbb{R}^N}u^2\, dx } \\
& < \frac{\ds\beta \left (1+\int_{\mathbb{R}^N}g(x)u^2\, dx \right ) } 
{\ds \int_{\mathbb{R}^N}u^2\, dx}=\beta,
\end{aligned}
$$
by \eqref{E12}, recalling that $\int_{\mathbb{R}^N}(1-g(x))u^2\, dx=1$, since $u \in A \in M_{k}$. But
$A^*$ is compact, so 
$$
\sup _{u^* \in A^*}\Phi(u^*)<\beta,
$$
and thus
$$
\lambda_{k}\le \sup _{u^* \in A^*}\Phi(u^*)<\beta.
$$
\end{proof}

We conclude this section with the
\begin{proof}[Proof of Proposition $\ref{noeig}$]
Suppose $u\in H^s(\R^N)$ is a solution of \eqref{E26}, then $u$ satisfies the Pohozaev identity 
$$
\frac{N-2s}{2}\int_{\mathbb{R}^N}|\xi |^{2s}|\hat{u}(\xi)|^2\, d\xi = N\mu \int_{\mathbb{R}^N}\frac{u^2}{2}\, dx,
$$
(see \cite[Proposition 4.1]{Secchi}). By \eqref{nor2}, we immediately get
$$
\frac{N\mu}{2} \int_{\mathbb{R}^N}u^2\, dx = 
\mu \frac{N-2s}{2}\int_{\mathbb{R}^N}u^2\, dx.
$$ If $u \not\equiv 0$ and $\mu\neq0$, then 
$$
 \frac{N\mu}{2} =\mu \frac{N-2s}{2},$$
that is $2s=0$, which is absurd. If $\mu=0$, we get $[u]_{H^s(\R^N)}=0$, so that $\hat u\equiv0$, and so $u\equiv 0$.
\end{proof}

\subsection{Another characterization for $\boldsymbol{\lambda_{2}}$}

For further investigations where the spectrum of $L_\beta$ plays a r\^ole, we think it is useful to provide another characterization of the second eigenvalue $\lambda_{2}$ in terms of minimax values:
\begin{proposition}\label{P1}
Let us set
$$
\Sigma^*=\Big\{h\,:\, h\in C\big( [0,1],\Sigma \big) \mbox{ and } -h(1)=h(0) \ge 0  \Big\}
$$
and
\[
\lambda^*=\inf_{h\in \Sigma^*} \sup_{t\in [0,1]}\Phi(h(t)).
\]
Then $\lambda^*=\lambda_{2}.$
\end{proposition}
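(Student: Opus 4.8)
The plan is to prove the two inequalities $\lambda_2\le\lambda^*$ and $\lambda^*\le\lambda_2$ separately. Throughout I write $\mathcal B(u,v)$ for the symmetric bilinear form
\[
\mathcal B(u,v)=\int_{\R^N\times\R^N}\frac{(u(x)-u(y))(v(x)-v(y))}{|x-y|^{N+2s}}\,dxdy+\beta\int_{\R^N}g\,uv\,dx,
\]
so that $\Phi(u)=\mathcal B(u,u)$ and, in particular, $\Phi$ is even. The whole argument hinges on the positive first eigenfunction. Assuming first $\lambda_1<\beta$, Theorem \ref{T1}$(2)$--$(3)$ furnishes a normalized $e_1>0$ with $\mathcal B(e_1,\varphi)=\lambda_1\int_{\R^N}e_1\varphi\,dx$ for all $\varphi\in H^s(\R^N)$; in particular $\mathcal B(e_1,u)=0$ whenever $\int_{\R^N}e_1u\,dx=0$. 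I also set $\ell=\inf\{\Phi(u):u\in\Sigma,\ \int_{\R^N}e_1u\,dx=0\}$ and recall that $\Phi\ge\lambda_1$ on $\Sigma$.

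For $\lambda_2\le\lambda^*$ I take any $h\in\Sigma^*$ and form $A=h([0,1])\cup(-h([0,1]))$, which is compact and symmetric. I claim $\gamma(A)\ge2$: if some odd $f\in C^0(A,\R\setminus\{0\})$ existed, then $t\mapsto f(h(t))$ would be continuous on $[0,1]$ with $f(h(1))=f(-h(0))=-f(h(0))$, so it would vanish by the intermediate value theorem, contradicting $f\neq0$. Hence $A\in\Sigma_2$, and since $\Phi$ is even, $\sup_A\Phi=\sup_t\Phi(h(t))$; taking the infimum over $h\in\Sigma^*$ yields $\lambda_2\le\lambda^*$. Note this step does not use the condition $h(0)\ge0$.

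The reverse inequality is the crux, and I route it through $\ell$. First, $\ell\le\lambda_2$: for any $A\in\Sigma_2$ the odd continuous functional $u\mapsto\int_{\R^N}e_1u\,dx$ must vanish at some $\bar u\in A$ (otherwise it would be an odd map into $\R\setminus\{0\}$, forcing $\gamma(A)\le1$), whence $\sup_A\Phi\ge\Phi(\bar u)\ge\ell$ and so $\lambda_2\ge\ell$. Second, $\lambda^*\le\ell$: given $\ep>0$ I pick $u^*\in\Sigma$ with $\int_{\R^N}e_1u^*\,dx=0$ and $\Phi(u^*)\le\ell+\ep$, and consider the great circle $h(t)=\cos(\pi t)\,e_1+\sin(\pi t)\,u^*$ for $t\in[0,1]$. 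Orthonormality of $e_1,u^*$ in $L^2$ gives $h(t)\in\Sigma$, while $h(0)=e_1\ge0$ and $h(1)=-e_1=-h(0)$, so $h\in\Sigma^*$; the decisive point is that $\mathcal B(e_1,u^*)=0$, so the mixed term drops and $\Phi(h(t))=\cos^2(\pi t)\,\lambda_1+\sin^2(\pi t)\,\Phi(u^*)$, whence $\sup_t\Phi(h(t))=\Phi(u^*)\le\ell+\ep$ (using $\Phi(u^*)\ge\ell\ge\lambda_1$). Letting $\ep\to0$ gives $\lambda^*\le\ell$, and combining with $\ell\le\lambda_2$ yields $\lambda^*=\lambda_2\,(=\ell)$ when $\lambda_1<\beta$. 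The main obstacle is exactly this construction: one must produce admissible paths whose base point is forced to be nonnegative yet whose energy does not exceed $\lambda_2$, and the great circle through $e_1$ and an $L^2$-orthogonal near-minimizer resolves both difficulties at once, since it is based at the positive $e_1$ and, $e_1$ being an eigenfunction, the cross term vanishes and the energy along the arc is a plain convex combination of $\lambda_1$ and $\Phi(u^*)$.

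Finally I dispatch the degenerate case $\lambda_1=\beta$, where $e_1$ need not exist; then $\lambda_k\equiv\beta$ by Theorem \ref{T1}$(1)$, so $\lambda_2=\beta$ and I only need $\lambda^*\le\beta$. I fix any nonnegative normalized $v_0$ and any $u^*\perp v_0$ in $L^2$, take the corresponding great circle $h\in\Sigma^*$ from $v_0$ to $-v_0$, and dilate it: with $v\mapsto v_\tau:=\tau^{N/2}v(\tau\,\cdot)$ the $L^2$-isometry from the proof of Theorem \ref{T1}$(1)$, the maps $h_\tau(t):=(h(t))_\tau$ still lie in $\Sigma^*$ (dilation is linear, preserves the $L^2$ norm, and sends $v_0\ge0$ to $(v_0)_\tau\ge0$), while the computation in Theorem \ref{T1}$(1)$ gives $\Phi(h_\tau(t))\to\beta$ uniformly in $t$ as $\tau\to0^+$ because $h([0,1])$ is compact. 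Hence $\lambda^*\le\beta=\lambda_2$, completing the proof.
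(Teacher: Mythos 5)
Your proof is correct, and while the easy inequality $\lambda_2\le\lambda^*$ is handled essentially as in the paper (closing the path into a symmetric compact set of genus $\ge 2$), your proof of $\lambda^*\le\lambda_2$ takes a genuinely different route. The paper works at the top of the spectrum gap: assuming $\lambda_2<\beta$, it takes a nodal second eigenfunction $u=u_+-u_-$, tests the eigenvalue equation with $u_\pm$ to control $\Phi(u_\pm)$, and runs the path $t\mapsto (u_+\cos\pi t+u_-\sin\pi t)/\|\cdot\|_2$, where the sign of the nonlocal cross term $\int u_+(x)u_-(y)|x-y|^{-N-2s}\,dxdy$ must be tracked to conclude $\Phi(h(t))\le\lambda_2$; it then treats $\lambda_2=\beta$ separately by dilation. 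You instead work at the bottom: you introduce $\ell=\inf\{\Phi(u):u\in\Sigma,\ \int e_1u=0\}$, prove $\ell\le\lambda_2$ by the standard genus/orthogonality intersection argument, and realize $\lambda^*\le\ell$ via the great circle through $e_1$ and an $L^2$-orthogonal near-minimizer, where the cross term vanishes exactly because $e_1$ is an eigenfunction. This buys you two things: the energy along the path is a plain convex combination of $\lambda_1$ and $\Phi(u^*)$ with no sign analysis of nonlocal cross terms, and you only need the non-degeneracy hypothesis $\lambda_1<\beta$ rather than $\lambda_2<\beta$ (the remaining case $\lambda_1=\beta$ being dispatched by the same dilation trick the paper uses). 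As a byproduct you also obtain the Courant--Fischer-type identity $\lambda_2=\ell$, which the paper does not state. The paper's construction, on the other hand, yields the slightly more structural information that the sup along its path is attained in the two-dimensional span of $u_+$ and $u_-$ of an actual second eigenfunction. Both arguments rely on the same ingredients from Theorem \ref{T1} and both are complete.
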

\begin{proof}
Take $h\in \Sigma^*$ and define the map $\tilde{h}:S^{1} \to \Sigma$ by
$$
\tilde{h}(e^{i\theta}) =
\begin{cases}
h\left (\dfrac{\theta}{\pi}\right ) & \mbox{ if } 0\le \theta \le \pi\\  \\
-h\left (\dfrac{\theta}{\pi}-1\right ) & \mbox{ if } \pi \le \theta \le 2\pi.
\end{cases}
$$
Set $A=\tilde{h}(S^{1})$; then $A\in \Sigma_{2}$. In fact, $A$ is compact, because $\tilde{h}$ is continuous; moreover, $A$ is symmetric, for $\tilde{h}$ is odd; finally, $\gamma(A)\ge 2$: if we suppose by contradiction that $\gamma(A)=1$, there should exist 
an odd function $\mathscr{H} \in C^0\left (A,\mathbb{R}\setminus \left \{0\right \} \right )$, 
but $A$ is a connected set, being the image of a connected set through a continuous function, while $\mathscr{H}(A)$ is disconnected, and a contradiction arises. 

So, by definition of $\lambda_{2}$, we have that
\begin{equation}\label{E13}
\lambda_{2}\le \sup_{u\in A}\Phi(u)=\sup_{t\in[0,1]}\Phi(h(t)),
\end{equation}
and hence 
$$
\lambda_{2}\le \inf_{h\in \Sigma^*} \sup_{t\in[0,1]}\Phi(h(t)) =\lambda^*.
$$

By Theorem \ref{T1}.(1), we know that $\lambda_{2}\le \beta$, and using arguments similar to those used therein, we can prove that $\lambda^*\le \beta$. Thus, if $\lambda_{2}=\beta$, then $\lambda_{2}=\lambda^*=\beta$. 

On the other hand, if $\lambda_{2}<\beta$, by 
Theorem \ref{T1}.(2),.(3), there exist a sign-changing eigenfunction $u$, corresponding to the eigenvalue $\lambda_{2}$. 
Let $u=u_{+}-u_{-}$, with $u_{+},u_{-}\not\equiv 0$, and where $u_+:=\max\{u,0\}$. Taking $\varphi =u_{\pm}$ as test function in 
(\ref{E3}) with $k=2$, we have 
\begin{equation}\label{sopra}
\begin{aligned} &\int_{\mathbb{R}^N \times \mathbb{R}^N} 
\frac{(u(x)-u(y))(u_{\pm}(x) -  u_{\pm}(y))}{|x-y|^{N+2s}} \, dxdy
\\  &+ \beta \int_{\mathbb{R}^N}g(x)uu_{\pm}\, dx  = \lambda _{2}\int_{\mathbb{R}^N}uu_{\pm}\, dx.
\end{aligned}
\end{equation}
Writing $u=u_+-u_-$, we can observe that
$$
\begin{aligned}
&\int_{\mathbb{R}^N \times \mathbb{R}^N} 
\frac{(u(x)-u(y))(u_{+}(x) -  u_{+}(y))}{|x-y|^{N+2s}} \, dxdy  \\
& = \int_{\mathbb{R}^N \times \mathbb{R}^N} 
\frac{|u_{+}(x) -  u_{+}(y)|^2}{|x-y|^{N+2s}} \, dxdy +2\int_{\mathbb{R}^N \times \mathbb{R}^N} 
\frac{u_{+}(x) u_{-}(y)}{|x-y|^{N+2s}} \, dxdy .
\end{aligned}
$$
Using a similar argument with $u_{-}$, from \eqref{sopra} we have that 
\begin{equation}\label{E14}
\begin{aligned}
\Phi(u_{\pm})&=\int_{\mathbb{R}^N \times\mathbb{R}^N}\frac{|u_{\pm}(x) -  u_{\pm}(y)|^2}
{|x-y|^{N+2s}} \, dxdy+\beta \int_{\mathbb{R}^N}g(x)u_{\pm}^2\, dx
\\&=-2\int_{\mathbb{R}^N \times \mathbb{R}^N} 
\frac{u_{+}(x) u_{-}(y)}{|x-y|^{N+2s}} \, dxdy
+\lambda _{2}\int_{\mathbb{R}^N}u_{\pm}^2\, dx .
\end{aligned}
\end{equation}

Now, set $h:[0,1] \to \Sigma$ by
$$h(t)=\dfrac{u_{+}\cos(\pi t)+u_{-}\sin(\pi t)}{\left\| u_{+}\cos(\pi t)+u_{-}\sin(\pi t) \right\|_{2}}.
$$
Obviously, $h$ is continuous and 
$$
h(0)=\dfrac{u_{+}}{\|u_{+}\|_{2}} ,\qquad h(1)=-\dfrac{u_{+}}{\|u_{+}\|_{2}},
$$
so $h\in \Sigma^{*}$. Let us compute $\Phi(h(t))$:
$$\begin{aligned}
&\Phi(h(t))\\
&=\int_{\mathbb{R}^N \times \mathbb{R}^N} 
\frac{((u_{+}(x)\cos(\pi t)+u_{-}(x)\sin(\pi t))-((u_{+}(y)\cos(\pi t)+u_{-}(y)\sin(\pi t))^2}
{\left\| u_{+}\cos(\pi t)+u_{-}\sin(\pi t) \right\|^2_{2}|x-y|^{N+2s}} \, dxdy
\\& \ \ \ +\beta   \dfrac{\cos^2(\pi t)}{\| ... \|_2^2} \int_{\mathbb{R}^N}g(x)|u_{+}|^2\, dx
+\beta \dfrac{\sin^2(\pi t)}{\| ... \|_2^2} \int_{\mathbb{R}^N}g(x)|u_{-}|^2\, dx.
\\
& = \dfrac{\cos^2(\pi t)}{\| ... \|_2^2}\int_{\mathbb{R}^N \times \mathbb{R}^N} 
\frac{|u_{+}(x) -  u_{+}(y)|^2}{|x-y|^{N+2s}} \, dxdy+ \dfrac{\sin^2(\pi t)}{\| ... \|_2^2}
\int_{\mathbb{R}^N \times \mathbb{R}^N} \frac{|u_{-}(x) -  u_{-}(y)|^2}{|x-y|^{N+2s}} \, dxdy\\
&-\frac{4\cos(\pi t)\sin(\pi t)}{\| ... \|_2^2}\int_{\mathbb{R}^N \times \mathbb{R}^N}\frac{u_{+}(x)u_{-}(y)}{|x-y|^{N+2s}}\, dxdy +\beta   \dfrac{\cos^2(\pi t)}{\| ... \|_2^2} \int_{\mathbb{R}^N}g(x)|u_{+}|^2\, dx\\
&+\beta \dfrac{\sin^2(\pi t)}{\| ... \|_2^2} \int_{\mathbb{R}^N}g(x)|u_{-}|^2\, dx.
\end{aligned}$$
By \eqref{E14}, we get
\begin{equation}\label{fit}
\begin{aligned}
\Phi(h(t))&
=\dfrac{\cos^2(\pi t)}{\|...\|^2_2}\left(\lambda_2\int_{\R^N}u_+^2dx-2\int_{\R^N}\dfrac{u_+(x)u_-(y)}{|x-y|^{N+2s}}dx\right)\\
&+\dfrac{\sin^2(\pi t)}{\|...\|^2_2}\left(\lambda_2\int_{\R^N}u_-^2dx-2\int_{\R^N}\dfrac{u_+(x)u_-(y)}{|x-y|^{N+2s}}dx\right)\\
&-4\dfrac{\cos (\pi t)\sin (\pi t)}{\|...\|_2^2}\int_{\R^N}\dfrac{u_+(x)u_-(y)}{|x-y|^{N+2s}}dx.
\end{aligned}
\end{equation}
But
\begin{equation}\label{mag0}
u_+(x)u_-(y)\left(\cos^2(\pi t)+\sin^2(\pi t)+2\cos (\pi t)\sin (\pi t)\right]\ge0,
\end{equation}
so that \eqref{fit} and \eqref{mag0} imply that 
$$
\Phi(h(t))\le \lambda_{2}\dfrac{ \cos^2(\pi t)}{\| ... \|^2}\int_{\mathbb{R}^N}u_{+}^2\, dx
+\lambda_{2}\dfrac{ \sin^2(\pi t)}{\| ... \|^2}\int_{\mathbb{R}^N}u_{-}^2\, dx=\lambda_2.
$$
Thus $\Phi(h(t))\le \lambda_{2}$ for all $t\in[0,1]$, and then $$\lambda^*\le \sup_{t\in [0,1]}\Phi(h(t))\le \lambda_{2},$$
and hence $\lambda^*= \lambda_{2}$.
\end{proof}

\medskip
Received xxxx 20xx; revised xxxx 20xx.
\medskip

\end{document}